\newtheorem{theorem}{Theorem}
\newtheorem{lemma}{Lemma}
\newtheorem{claim}{Claim}
\newtheorem{proposition}{Proposition}
\theoremstyle{definition}
\newtheorem{definition}{Definition}
\newtheorem{problem}{Problem}
\newcommand{\conv}[1]{{\mbox{conv}(#1)}}
\newcommand\A{\mathcal{A}}
\newcommand\LL{L}
\begin{document}
\title{On the maximum size of an anti-chain
 of linearly separable sets
 and convex pseudo-discs$^1$}
\author{
   Rom Pinchasi\thanks{
      Mathematics Dept.,
      Technion---Israel Institute of Technology,
      Haifa 32000, Israel.
\hfil\penalty0\hfilneg
      \texttt{room@math.technion.ac.il}.
   } \and
   G\"unter Rote\thanks{
      Institut f\"ur Informatik,
      Freie Universit\"at Berlin, Takustr.\ 9, 14195 Berlin, Germany.
\hfil\penalty0\hfilneg
      \texttt{rote@inf.fu-berlin.de}}
}
\footnotetext[1]{This research was supported by a Grant from the G.I.F., the
German-Israeli Foundation for Scientific Research and
Development.}
\maketitle

\begin{abstract}
  We answer a question raised by Walter Morris, 
  and independently by Alon Efrat, about the maximum cardinality
  of an anti-chain composed of intersections of a given set of $n$
  points in the plane with half-planes. We approach this problem by
  establishing the equivalence with the problem of the maximum
  monotone path in an arrangement of $n$ lines. A related problem on
  convex pseudo-discs is also discussed in the paper.
\end{abstract}


\section{Introduction}

Let $P$ be a set of $n$ points in the plane, no three of which are collinear.
A subset of $P$ is called \emph{linearly separable} if it is the intersection
of $P$ with a closed half-plane.
A $k$-set of $P$ is a subset of $k$ points from $P$ which is linearly 
separable.
Let $\A_{k}=\A_k(P)$ denote the collection
of all $k$-sets of $P$. It is a well-known open problem to determine $f(k)$, 
the maximum possible cardinality of $\A_{k}$, where $P$ varies over all possible
sets of $n$ points in general position in the plane. The current records are
$f(k)=O(nk^{1/3})$ by Dey (\cite{D98}) and 
$f(\lfloor n/2 \rfloor) \geq ne^{\Omega(\sqrt{\log n})}$
by T\'oth (\cite{T01}).

Let $\A=\A(P)=
\cup_{k=0}^{n}\A_{k}$ be the family of all linearly separable 
subsets of $P$.
The family $\A$ is partially ordered by inclusion. 
Clearly, each $\A_{k}$ is an anti-chain in 
$\A$. The following problem was raised by Walter Morris in 2003 in relation 
with the \emph{convex dimension} of a point set (see \cite{ES88}) and,
as it turns out, it was independently raised by Alon Efrat 10 years before, 
in 1993: 

\begin{problem}\label{problem-1}
What is the maximum possible cardinality $g(n)$ 
of an anti-chain in the poset $\A$,
over all sets $P$ with $n$ points?
\end{problem}

In Section \ref{section:antichain} we show that in fact $g(n)$ can be 
very large, and in particular much larger than $f(n)$.

\begin{theorem}\label{theorem:antichain}
$g(n) =
\Omega(n^{2-\frac{d}{\sqrt{\log n}}})$, for some absolute constant $d>0$.
\end{theorem}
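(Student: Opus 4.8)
The plan is to pass to the dual and reduce the antichain problem to the existence of a long monotone path in a line arrangement, as announced in the abstract. Dualize $P$ to an arrangement $\mathcal{L}$ of $n$ lines $\ell_1,\dots,\ell_n$, where $\ell_i$ is the dual of $p_i$. Under the standard point--line duality, non-vertical directed lines in the primal correspond to points in the dual plane, and the subset of $P$ lying on one side of such a line corresponds to the set of dual lines passing below the corresponding dual point. Thus every linearly separable set is, up to complementation and the harmless treatment of vertical directions, a \emph{below-set} $B(q)=\{\,i : \ell_i \text{ lies below } q\,\}$ for some point $q$; its cardinality $|B(q)|$ is exactly the \emph{level} of the cell of $\mathcal{L}$ containing $q$, and inclusion of linearly separable sets is reflected faithfully by inclusion of below-sets. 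The first step is to record this dictionary precisely, so that an antichain in $\A$ becomes a family of cells of $\mathcal{L}$ whose below-sets are pairwise incomparable (complementation being an anti-automorphism of the poset, it costs only a factor of $2$).

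The elementary observation driving the construction is that two linearly separable sets of the \emph{same} cardinality are automatically incomparable; hence any single $\A_k$ is already an antichain, which merely reproduces the known bound $g(n)\ge f(n)$. To do much better I would combine \emph{many} levels while preserving incomparability, and this is exactly what a monotone path supplies. Consider an $x$-monotone path $\pi$ in $\mathcal{L}$ that is monotone in the $x$-coordinate but oscillates in level; as a point travels along $\pi$ from left to right it crosses the lines of $\mathcal{L}$ one at a time, producing a sequence of cells $C_1,\dots,C_m$, one per edge, with consecutive below-sets differing by a single element. The heart of the argument—and what I expect to be the main \emph{conceptual} obstacle—is a selection lemma: from the $m$ cells along $\pi$ one can choose $\Omega(m)$ whose below-sets form an antichain. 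Cells at a common level contribute for free, and for cells at distinct levels one must exploit the forward progress of the $x$-monotone path to show that the identities of the lines below a later, higher cell have been churned enough that some line below an earlier, lower cell is now excluded, ruling out containment. The same correspondence run in reverse turns an antichain into a monotone path, giving the full equivalence, but only this direction is needed for the lower bound.

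It remains to exhibit an arrangement of $n$ lines carrying a monotone path with $m=\Omega\!\big(n^{2-d/\sqrt{\log n}}\big)$ edges. Since the total number of edges of $\mathcal{L}$ is $\Theta(n^2)$, the path must use all but a sub-polynomial fraction of the arrangement while remaining $x$-monotone, and this is precisely where the factor $2^{\Theta(\sqrt{\log n})}=n^{\Theta(1/\sqrt{\log n})}$ is forced. I would build such an arrangement recursively, in the spirit of T\'oth's halving-line construction \cite{T01}: begin from a small gadget that already admits a monotone path using a constant fraction of its edges, and compose it with scaled copies of itself so that a monotone path in the composite arrangement is stitched together from monotone paths in the pieces. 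With the recursion parameters balanced as in \cite{T01}—so that the number of levels is $\Theta(\sqrt{\log n})$, each contributing a constant multiplicative gain—one reaches the stated trade-off. Verifying that both $x$-monotonicity and the length bound survive the composition is the most \emph{technical} part of the proof. Combining this arrangement with the selection lemma yields an antichain of size $\Omega(n^{2-d/\sqrt{\log n}})$, proving the theorem.
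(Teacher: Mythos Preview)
Your overall architecture---dualize $P$ to a line arrangement and reduce the antichain question to the existence of a long $x$-monotone path---is exactly the route the paper takes. Two places where your outline diverges or falls short:

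First, the long monotone path is not something you need to construct; the paper simply quotes the bound $\lambda(n)=\Omega(n^{2-d/\sqrt{\log n}})$ from Balogh--Regev--Smyth--Steiger--Szegedy~\cite{BRSSS04}. Your recursive sketch ``in the spirit of T\'oth'' is unnecessary and, as written, far too vague to replace that citation.

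Second, and more substantively, your ``selection lemma'' is the real content of the reduction, and the justification you give (``the identities of the lines below a later, higher cell have been churned enough'') is not a proof and does not straightforwardly become one. Along a monotone path the below-sets of adjacent cells can easily form long \emph{chains}---a path that bends only upward produces a nested sequence $\emptyset\subset\{\ell_1\}\subset\{\ell_1,\ell_2\}\subset\cdots$---so ``forward progress'' alone buys nothing. The paper's mechanism is quite specific and does not proceed by picking cells along the path at all. One restricts to the \emph{downward bends} of the path (at least half of all bends); at each such bend the incoming line $s$ and outgoing line $r$ satisfy $\operatorname{slope}(s)>\operatorname{slope}(r)$ and cut out a left wedge $U$ and a right wedge $W$. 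A short geometric claim shows $W_j\cap U_{j+1}\neq\emptyset$, and one places $p_j$ there. By construction the pair $(s_{i_{j+1}},r_{i_{j+1}})$ \emph{strongly separates} $p_j$ and $p_{j+1}$: each point lies above one of these two lines and below the other, with the roles swapped. That every pair $p_i,p_j$ (not just consecutive ones) is then strongly separated is the content of Lemma~\ref{lemma:consecutive}, a transitivity statement proved separately by induction on $j-i$. Strong separation of $p_i,p_j$ by two lines of the arrangement immediately gives a line in $B(p_i)\setminus B(p_j)$ and one in $B(p_j)\setminus B(p_i)$, hence incomparability. This package---downward bends, wedges, strong separation, and the consecutive-to-all lemma---is precisely what your ``churning'' sentence is standing in for, and it is where the work lies.
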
 


In an attempt to bound from above the function $g(n)$ one can view linearly
separable sets
as a special case of a slightly more general concept:

\begin{definition}
Let $P$ be a set of $n$ points in general position in the plane.
A Family $F$ of subsets of $P$ is called a family of 
\emph{convex pseudo-discs} if
the following two conditions are satisfied:

\begin{enumerate}

\item Every set in $F$ is the intersection of $P$ with a convex set.  

\item If $A$ and $B$ are two different sets in $F$,
then both sets $\conv{A} \setminus \conv{B}$ and 
$\conv{B} \setminus \conv{A}$ are connected (or empty).

\end{enumerate}
\end{definition}

One natural example for a family of convex pseudo-discs is the family $\A(P)$,
where $P$ is a set of $n$ points in general position in the plane.
To see this, observe that every linearly separable set
is the intersection of $P$ with a 
convex set, namely, a half-plane. It is therefore left to verify that if
$A=P \cap H_{A}$ and $B=P \cap H_{B}$, where $H_{A}$ and $H_{B}$ are
two half-planes, then both $\conv{A} \setminus \conv{B}$ and 
$\conv{B} \setminus \conv{A}$ are connected.
Let $A'=A \setminus H_{B}=A \setminus B = A\setminus \conv{B}$.  Since
$\conv{A'} \cap \conv{B} =\emptyset$, we have $\conv{A} \setminus
\conv{B} \supset \conv{A'}$. For any $x \in \conv{A} \setminus
\conv{B}$, we claim that there is a point $a' \in A'$ such that the
line segment $[x,a']$ is fully contained in $\conv{A} \setminus
\conv{B}$. This will clearly show that $\conv{A} \setminus \conv{B}$
is connected. Let $a_{1}, a_{2},a_{3}$ be three points in $A$ such
that $x$ is contained in the triangle 
$a_{1}a_{2}a_{3}$.  If each line segment $[x,a_{i}]$, for $i=1,2,3$,
contains a point of $\conv{B}$, it follows that $x \in \conv{B}$,
contrary to our assumption. Thus there must be a line segment $[x,a_{i}]$ that is contained in $A'=A\setminus \conv{B}$, and we are done.

In Section \ref{section:n2} we bound from above the maximum size of a family
of convex pseudo-discs of a set $P$ of $n$ points in the plane, assuming
that this family of subsets of $P$ is by itself an anti-chain with respect
to inclusion:

\begin{theorem}\label{theorem:n2}
Let $F$ be a family of convex pseudo-discs of a set $P$ of $n$
points in general position in the plane. If no member of $F$ is contained 
in another, then $F$ consists of at most
$4\binom n 2+1$ members. 
\end{theorem}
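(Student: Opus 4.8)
The plan is to bound the number of convex pseudo-discs by charging each set in $F$ to combinatorial features of the arrangement of points, specifically to pairs of points or to edges in some planar structure. Since the bound $4\binom{n}{2}+1$ is quadratic in $n$ and of the form (constant times pairs plus one), the natural strategy is to associate to each member $A \in F$ (except possibly one extremal member) a bounded number of distinguished pairs of points of $P$ appearing on the boundary of $\conv{A}$, and to argue that each pair of points can be "charged" only a bounded number of times across all of $F$.

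Let me think about the structure. First I would fix an arbitrary set $A_0 \in F$ (say of minimum or maximum cardinality) as a reference, and for every other $A \in F$ study how the boundary of $\conv A$ interacts with that of $\conv{A_0}$, or more symmetrically, consider the pairwise interactions guaranteed by the pseudo-disc condition. The key combinatorial consequence of condition (2) is that the boundaries of any two convex sets $\conv A$ and $\conv B$ cross in a controlled way: because both $\conv A \setminus \conv B$ and $\conv B \setminus \conv A$ are connected, the two convex curves $\partial\conv A$ and $\partial\conv B$ behave like genuine pseudo-discs, meaning their boundaries cross in at most two points. This is exactly the pseudo-disc property in the classical sense, and it is what makes the union complexity and related incidence structures linear or near-linear.

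The heart of the argument, and what I expect to be the main obstacle, is turning the antichain hypothesis into a counting bound. The idea is to look at the \emph{hull edges}: each $\conv A$ has a convex-polygon boundary whose edges are segments $[p,q]$ with $p,q \in A \subseteq P$. I would try to assign to each $A \in F$ one or two canonical hull edges — for instance the edge crossed by a sweeping direction, or the topmost/leftmost edge — and show via the antichain and pseudo-disc conditions that no ordered pair $(p,q)$ can serve as such a canonical edge for too many distinct members. The antichain property is essential here: if two members shared the same relevant boundary structure and one contained the other, we could derive a containment, contradicting that $F$ is an antichain. The factor $4$ and the additive $+1$ strongly suggest that each unordered pair $\{p,q\}$ is charged at most $4$ times (perhaps two orientations times two sides), with one member left uncharged as a base case.

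Concretely, I would proceed as follows. First, I would establish the two-intersection property of boundaries from condition (2), handling degenerate tangential contacts carefully since $P$ is only in general position and convex hulls of subsets may share vertices. Second, I would set up the charging scheme by picking a generic direction and assigning to each $A$ the pair of points defining, say, its two edges extremal in that direction, or the single edge that is "first" in a rotational sweep. Third, I would prove the crucial injectivity-up-to-constant lemma: if two distinct members $A, B \in F$ are charged to the same pair $\{p,q\}$ with the same orientation data, then the pseudo-disc crossing bound forces one of $\conv A \setminus \conv B$, $\conv B \setminus \conv A$ to be empty, hence $A \subseteq B$ or $B \subseteq A$, contradicting the antichain assumption. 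The delicate point — the real obstacle — is choosing the charging assignment so that it is well-defined for every member (including those whose convex hull is a single point or a segment) and so that the collision analysis genuinely reduces to the forbidden containment; making the $4$ tight rather than a larger constant will require a careful, direction-by-direction accounting of the at-most-two boundary crossings.
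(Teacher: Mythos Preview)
Your charging strategy has a concrete gap at the collision step. You claim that if two distinct members $A,B\in F$ are assigned the same extreme edge $\{p,q\}$ in a fixed generic direction, then the pseudo-disc condition forces $A\subseteq B$ or $B\subseteq A$. This is false. Take $p,q$ as the two topmost points of $P$ and choose further points so that $A=\{p,q,a,b\}$ has $a,b$ far to the lower-left while $B=\{p,q,c,d\}$ has $c,d$ far to the lower-right. Both $\conv A$ and $\conv B$ have $[p,q]$ as their top edge; $\conv A\setminus\conv B$ is a single connected piece on the left and $\conv B\setminus\conv A$ a single connected piece on the right, so the pseudo-disc condition holds; yet neither set contains the other. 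An extreme edge in one direction controls only one of the two permitted boundary crossings, so a single directional edge cannot pin down the set up to containment. To rule out collisions you would have to control tangents in \emph{all} directions simultaneously, at which point you are no longer charging an individual set to a bounded number of point pairs.

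The paper's proof takes an entirely different route: it uses Tverberg's linear-algebra technique (from his proof of the Graham--Pollak theorem), not a charging argument. For each directed line $L=\overrightarrow{xy}$ through two points of $P$ one records the sets of $F$ lying to the left of $L$ that are supported by $L$ at $x$ only, at $y$ only, or along the edge $xy$; call these collections $L_x,L_y,L_{xy}$. A combinatorial lemma (this is where both the pseudo-disc and the anti-chain hypotheses enter) shows that for every ordered pair $(A,B)$ there is either exactly one $L$ with $A\in L_x$, $B\in L_y$, or else exactly two $L$ with one of $A,B$ in $L_{xy}$ and the other in the matching $L_x$ or $L_y$. Attaching a variable $z_i$ to each $A_i\in F$ and forming for every $L$ a quadratic form from the sums $\sum_{A_i\in L_x}z_i$, $\sum_{A_i\in L_y}z_i$, $\sum_{A_i\in L_{xy}}z_i$, the lemma makes the total over all $L$ equal $(z_1+\cdots+z_m)^2-(z_1^2+\cdots+z_m^2)$. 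The linear conditions $\sum_{A_i\in L_x}z_i=0$ and $\sum_{A_i\in L_y}z_i=0$ over all $L$, together with $z_1+\cdots+z_m=0$, give $4\binom n2+1$ equations; if $m$ exceeded this a nontrivial solution would make the left side vanish while the right side is strictly negative. The constant $4\binom n2+1$ thus arises as a count of linear equations, not as a per-pair charging multiplicity.
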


Clearly, in view of Theorem \ref{theorem:antichain}, 
the result in Theorem \ref{theorem:n2} is nearly best possible. We show by a
simple construction that Theorem \ref{theorem:n2} is in fact tight, apart from 
the constant multiplicative factor of $n^2$.

\section{Large anti-chains of linearly separable sets}\label{section:antichain}

Instead of considering Problem~\ref{problem-1} directly, we will consider a
related problem.

\begin{definition}
  For a pair $x,y$ of points and a pair
$\ell_{1}, \ell_{2}$ of non-vertical lines,
we say that
 $x,y$ \emph{strongly separate}
$\ell_{1}, \ell_{2}$ if
$x$ lies strictly above $\ell_{1}$ and strictly below $\ell_{2}$, and
$y$ lies strictly above $\ell_{2}$ and strictly below $\ell_{1}$.

We will also take the dual viewpoint and say that $\ell_{1}, \ell_{2}$
strongly separate $x,y$.  (In fact, this relation is invariant under
the standard point-line duality.)

If we have a set $\LL$ of lines, we say that the point pair
$x,y$ is \emph{strongly separated} by $\LL$, if $\LL$ contains two lines
$\ell_{1}, \ell_{2}$ that strongly separate $x,y$.

A pair of lines $\ell_{1}, \ell_{2}$ is said to be strongly
separated by a set $P$ of points if there are two points $x,y\in P$
that strongly separate $\ell_{1}$ and $\ell_{2}$.
\end{definition}

Using the above terminology one can reduce Problem~\ref{problem-1} to the following
problem:

\begin{problem}\label{problem-2}
Let $P$ be a set of $n$ points in the plane.
What is the maximum possible cardinality $h(n)$ (taken over all
possible sets $P$ of $n$ points)
of a set of lines $\LL$ in the plane such 
that for every two lines $\ell_{1}, \ell_{2} \in \LL$,
 $P$ strongly 
separates $\ell_{1}$ and $\ell_{2}$.
\end{problem}

\begin{figure}[ht]
\begin{center}
\includegraphics{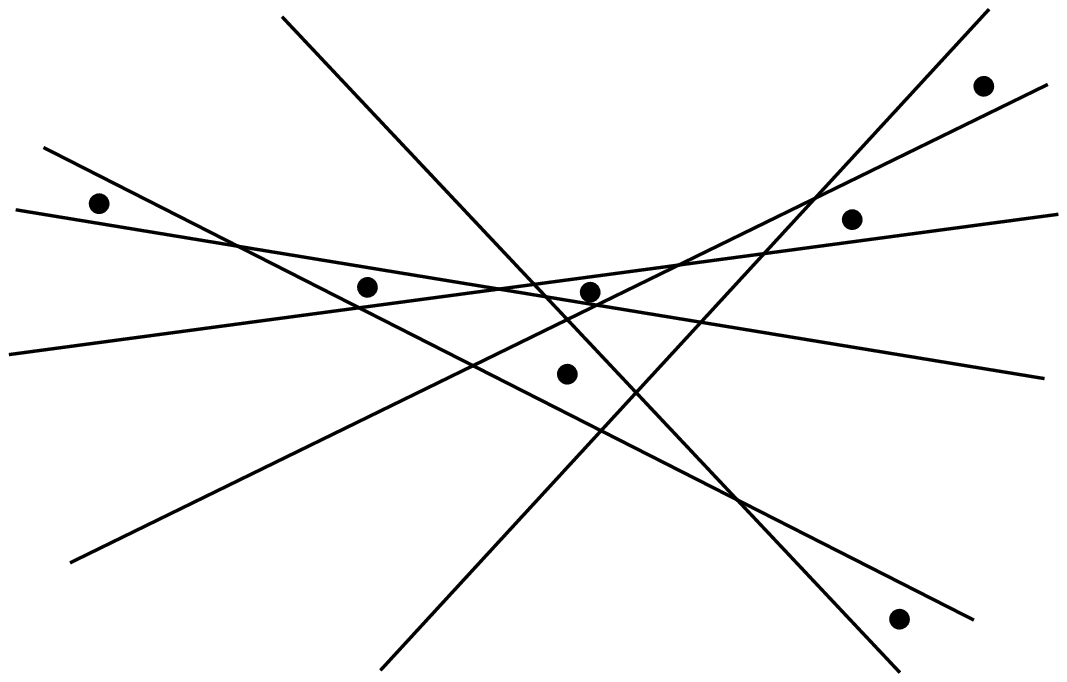}
\caption{Problem~\ref{problem-2}.}
\label{fig:1}
\end{center}
\end{figure}

To see the equivalence of Problem~\ref{problem-1} and Problem~\ref{problem-2}, 
let $P$ be a set 
of $n$ points and $\LL$ be a set of $h(n)$ lines that answer 
Problem~\ref{problem-2}.
We can assume that none of the points lie on a line of $\LL$.
 Then with each of the lines $\ell \in \LL$ we associate the subset
of $P$ which is the intersection of $P$ with the half-plane
 below $\ell$. We thus obtain $h(n)$ subsets of $P$ each
of which is a linearly separable subset of $P$. 
Because of the condition on $\LL$ and $P$,
none of these linearly separable sets may contain another. 
Therefore we obtain $h(n)$
elements from $\A(P)$ that form an anti-chain, hence $g(n) \geq h(n)$.

Conversely, assume we have an anti-chain of size $g(n)$ in 
$\A(P)$ for a set $P$ of $n$ points.
Each linearly separable set is the intersection of $P$ with a half-plane, 
which is
bounded by some line $\ell$. We can assume without loss of generality
that
none of these lines is vertical, and at least half of the half-spaces
lie
below their bounding lines. These lines form a set $L$ of at least
$\lceil g(n)/2\rceil$ lines, and each pair of lines is separated by
two points from the $n$-point set $P$. Thus,
$h(n) \geq \lceil g(n)/2 \rceil$.

Before reducing Problem~\ref{problem-2} to another problem, we need the following simple
lemma.

\begin{lemma}\label{lemma:consecutive}
Let $\ell_{1}, \ldots, \ell_{n}$ be $n$ non-vertical lines arranged in
increasing order of slopes. Let $P$ be a set of points.
Assume that for every $1 \leq i <n$, $P$
strongly separates $\ell_{i}$ and $\ell_{i+1}$. 
Then for every $1 \leq i < j \leq n$,
$P$ strongly separates $\ell_{i}$ and $\ell_{j}$.
\end{lemma}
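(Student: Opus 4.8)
The plan is to recast \emph{strong separation} combinatorially and then reduce the whole statement to a transitivity property of three slope-consecutive lines. Unpacking the definition, for $i<j$ the set $P$ strongly separates $\ell_i,\ell_j$ precisely when $P$ contains a point lying above $\ell_i$ and below $\ell_j$ and another point lying above $\ell_j$ and below $\ell_i$. Recording for each $p\in P$ its sign pattern $s(p)\in\{+,-\}^n$, with $s_i(p)=+$ iff $p$ lies above $\ell_i$, this says that on the coordinate pair $(i,j)$ both patterns $(+,-)$ and $(-,+)$ occur among the points of $P$. In this language the lemma reads: if both patterns occur on every consecutive pair $(i,i+1)$, then they occur on every pair $(i,j)$. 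I would prove this by induction on the gap $j-i$, the base case $j-i=1$ being the hypothesis. The inductive step applies, to the three slope-sorted lines $\ell_i,\ell_{j-1},\ell_j$, the following transitivity claim: if $P$ strongly separates $(\ell_\alpha,\ell_\beta)$ and $(\ell_\beta,\ell_\gamma)$ for slopes $a_\alpha<a_\beta<a_\gamma$, then it strongly separates $(\ell_\alpha,\ell_\gamma)$.

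The transitivity claim is \emph{false} as a purely combinatorial statement: the two patterns $(+,-,+)$ and $(-,+,-)$ on the three coordinates satisfy both consecutive conditions yet give $(+,+)$ and $(-,-)$ on the outer pair. Hence the heart of the proof is a geometric exclusion ruling these out, and this is the step I expect to be the main obstacle. The key observation I would establish is this: among three lines with $a_\alpha<a_\beta<a_\gamma$, the middle-slope line $\ell_\beta$ is strictly above both $\ell_\alpha$ and $\ell_\gamma$ only for $x$-values to the right of the crossing $\ell_\alpha\cap\ell_\beta$ and to the left of $\ell_\beta\cap\ell_\gamma$, and it is strictly below both of them only on the reversed interval. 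These two intervals are $(p,q)$ and $(q,p)$, where $p,q$ are the abscissae of the two crossings lying on $\ell_\beta$, so at most one of them is nonempty. Consequently at most one of the patterns $(+,-,+)$ (a point below $\ell_\beta$ but above both others, which forces $\ell_\beta$ topmost at that abscissa) and $(-,+,-)$ (which forces $\ell_\beta$ bottommost) can be realized by any point of the plane.

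It then remains to deduce the claim from this exclusion by a symmetric case analysis. Suppose first that $(-,+,-)$ is the forbidden pattern. The hypothesis yields a point $u'$ with $s_\alpha(u')=-,\ s_\beta(u')=+$; were $u'$ below $\ell_\gamma$ it would realize $(-,+,-)$, so $u'$ lies above $\ell_\gamma$, giving a point above $\ell_\gamma$ and below $\ell_\alpha$ --- one of the two witnesses required for $(\ell_\alpha,\ell_\gamma)$. Likewise a point $w$ with $s_\beta(w)=+,\ s_\gamma(w)=-$ cannot be below $\ell_\alpha$ (again $(-,+,-)$), so it lies above $\ell_\alpha$ and below $\ell_\gamma$, the second witness. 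The other case, when $(+,-,+)$ is forbidden, is handled symmetrically, now invoking the witnesses coming from the $(+,-)$ pattern on $(\alpha,\beta)$ and the $(-,+)$ pattern on $(\beta,\gamma)$. Degeneracies are harmless: if the three lines are concurrent, then both forbidden patterns are simultaneously unrealizable, and either case applies. Once the ordering behaviour of the middle-slope line is pinned down, the remaining combinatorics is routine, so the whole difficulty is concentrated in the exclusion observation.
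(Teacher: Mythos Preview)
Your argument is correct and follows the same overall scheme as the paper: induction on $j-i$, with the inductive step reduced to a statement about three slope-ordered lines. The paper inserts the middle line $\ell_{i+1}$ (you use $\ell_{j-1}$, which is symmetric) and splits into two cases according to which of the two rightward rays of $\ell_i$ and $\ell_j$ emanating from their intersection $B$ is crossed by the middle line; in each case one of the two required witnesses is read off directly from a wedge containment, and the other witness is declared ``symmetric''. Your version recasts the same dichotomy combinatorially: at most one of the sign patterns $(+,-,+)$ and $(-,+,-)$ is realizable for the middle line, and the two cases in the paper correspond exactly to which pattern is excluded. What your packaging buys is that both witnesses fall out of a single case analysis (using the $(\alpha,\beta)$ and $(\beta,\gamma)$ hypotheses simultaneously) rather than appealing to a second, unspelled-out symmetric argument; what the paper's packaging buys is a more pictorial, ray-based description that avoids introducing the sign-pattern notation. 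The underlying geometric content---that the middle-slope line cannot be simultaneously topmost at one abscissa and bottommost at another---is the same in both.
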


\begin{proof}
We prove the lemma by induction on $j-i$.
For $j=i+1$ there is nothing to prove. Assume $j-i \geq 2$.
We first show the existence of a point $x \in P$ that lies above $\ell_{i}$ 
and below $\ell_{j}$. Let $B$ denote the intersection point of $\ell_{i}$
and $\ell_{j}$. Let $r_i$ denote the ray whose apex is $B$, included in 
$\ell_{i}$, and points to the right. Similarly, let $r_{j}$ denote the ray
whose apex is $B$, included in $\ell_{j}$, and points to the right.

Since the slope of $\ell_{i+1}$ is between the slope of $\ell_{i}$ and 
 the slope of $\ell_{j}$, $\ell_{i+1}$ must intersect either 
$r_{i}$ or $r_{j}$ (or both, in case it goes through~$B$).

\noindent {\bf Case 1.} $\ell_{i+1}$ intersects $r_{i}$. Then there is a point 
$x \in P$ that lies above $\ell_{i}$ and below $\ell_{i+1}$. This point $x$
must also lie below $\ell_{j}$.

\noindent {\bf Case 2.} $\ell_{i+1}$ intersects $r_{j}$. Then, by the 
induction hypothesis, there is a point 
$x \in P$ that lies above $\ell_{i+1}$ and below $\ell_{j}$. This point $x$
must also lie above $\ell_{i}$.

The existence of a point $y$ that lies above $\ell_{j}$ and below $\ell_{i}$
is symmetric.
\end{proof}

By Lemma \ref{lemma:consecutive},  Problem~\ref{problem-2} is equivalent to
 following problem.

\begin{problem}\label{problem-3}
  What is the maximum cardinality $h(n)$ of a collection of lines
  $\LL=\{\ell_{1}, \ldots ,\ell_{h(n)}\}$ in the plane, indexed so
  that the slope of $\ell_{i}$ is smaller than the slope of $\ell_{j}$
  whenever $i<j$, such that there exists a set $P$ of $n$
  points 
  that strongly separates $\ell_{i}$ and $\ell_{i+1}$, for every $1
  \leq i < h(n)$?
\end{problem}

We will consider the dual problem of Problem~\ref{problem-3}:

\begin{problem}\label{problem-4}
What is the maximum cardinality $h(n)$ of a set of points
$P=\{p_1, \ldots, p_{h(n)}\}$ in the plane, indexed so that 
the $x$-coordinate of $p_{i}$ is smaller than the $x$-coordinate of $p_{j}$,
whenever $i<j$, 
such that
there exists a set 
$\LL$ of $n$ lines that strongly separates $p_{i+1}$ and $p_{i}$,
 for every $1 \leq i < h(n)$?
\end{problem}

We will relate Problem~4 to another well-known problem:
the question of the longest monotone path in an arrangement of lines.

Consider an $x$-monotone path in a
line arrangement in the plane.
The \emph{length} of such a path is the number of different line segments
that constitute the path, assuming that consecutive line segments on the path 
belong to different lines in the arrangement. (In other words, if the
path passes through a vertex of the arrangement without making a turn,
this does not count as a new edge.)

\begin{problem}\label{problem-monotone}
  What is the maximum possible length $\lambda(n)$ of an $x$-monotone
path in an arrangement of $n$ lines?
\end{problem}

A construction of \cite{BRSSS04} gives a simple line arrangement in
the plane which consists of $n$ lines and which contains an
$x$-monotone path of length $\Omega(n^{2-\frac{d}{\sqrt{\log n}}})$
for some absolute constant $d>0$.
No upper bound that is asymptotically better than the trivial bound
of $O(n^2)$ is known.

Problem~\ref{problem-monotone}
is closely related to Problem~\ref{problem-4}, and hence also to the other problems:

\begin{proposition}
  \begin{equation}
    \label{eq:h1}
h(n) \geq \left\lceil\frac{\lambda(n)+1}{2}\right\rceil,
  \end{equation}
  \begin{equation}
    \label{eq:h2}
\lambda(n) \geq h(n)-2
  \end{equation}
\end{proposition}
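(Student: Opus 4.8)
The plan is to prove both inequalities through a single geometric dictionary between the turns of a monotone path and strongly separated consecutive pairs, built on a ``wedge'' description of strong separation. The first step is to record this description: if $a,b$ are two lines with $\mathrm{slope}(a)<\mathrm{slope}(b)$ meeting at a vertex $v$, then for points $p,q$ with the $x$-coordinate of $p$ less than that of $q$, the pair $\{a,b\}$ strongly separates $\{p,q\}$ exactly when $p$ lies in the left wedge at $v$ (below $a$ and above $b$) and $q$ lies in the right wedge (above $a$ and below $b$). In particular $p$ and $q$ lie on opposite horizontal sides of $v$, so the separating vertex always falls strictly between the two points in the $x$-order. This turns ``consecutive points are strongly separated'' into a purely local statement about a vertex of the arrangement, and it is the identity I would verify first, since everything else reduces to bookkeeping around it.

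For the inequality $\lambda(n)\ge h(n)-2$ I would start from an optimal configuration for Problem~\ref{problem-4}: points $p_1,\dots,p_{h}$ in increasing $x$-order together with the $n$ lines $\LL$, where $p_i,p_{i+1}$ are strongly separated by a pair crossing at a vertex $v_i$. By the wedge description the $v_i$ occur in increasing $x$-order, each $p_{i+1}$ sitting simultaneously in the right wedge of $v_i$ and the left wedge of $v_{i+1}$. I would then thread a single $x$-monotone path through the arrangement that makes a genuine turn at (or immediately beside) each $v_i$; between consecutive separating vertices the path is allowed to travel along a line of $\LL$ and cross other lines \emph{without} turning, which costs nothing in the length count, so that the $h-1$ forced turns translate into a path of at least $h-2$ edges, the deficit of two absorbing the two end pieces.

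For the inequality $h(n)\ge\lceil(\lambda(n)+1)/2\rceil$ I would run the dictionary backwards. Given a monotone path of length $\lambda$, I classify each turn as a valley (slope increases) or a peak (slope decreases); placing a sample point just \emph{below} the path certifies, via the wedge description, that the two samples flanking a valley are strongly separated, while placing a sample just \emph{above} does the same across a peak. I would then read off a strongly separated chain by sampling one point per edge and choosing the side (above or below) consistently along the path, which realizes roughly every second turn and yields $\lceil(\lambda+1)/2\rceil$ points in increasing $x$-order.

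The step I expect to be the real obstacle is the global assembly in both directions, and it is precisely what produces the asymmetric losses. A single increasing chain demands that the two samples meeting at a turn lie on a common side of the path---both below at a valley, both above at a peak---so one edge cannot serve a valley at one end and a peak at the other; reconciling this across the alternation of peaks and valleys is what forces me to keep only about half the turns and accounts for the factor of two in~\eqref{eq:h1}. In the other direction a monotone path is far more forgiving, since straight crossings are free of charge, so almost all $h-1$ separating vertices can be incorporated into one path and only the two ends are lost. Making these two assembly arguments rigorous---especially guaranteeing that the path of the first direction stays $x$-monotone and genuinely turns near every $v_i$ even when consecutive separating pairs share no common line---is where the careful work lies.
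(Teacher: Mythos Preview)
Your wedge formulation and the peak/valley dichotomy are exactly the paper's framework, so the overall approach matches. Two points deserve comment.

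For \eqref{eq:h1}, your plan to ``sample one point per edge'' does not quite work as stated: if you place all samples above the path, consecutive samples across a valley are \emph{not} strongly separated, so the resulting $\lambda$ points do not form a chain and you must subsample. The paper makes this precise by discarding the per-edge idea and instead placing one point for each \emph{peak}: at the $j$-th peak it takes the left and right wedges $U_j,W_j$, proves that $W_j\cap U_{j+1}\neq\emptyset$ for consecutive peaks, and chooses $p_j$ in that intersection. The nonemptiness is the real content you flag as ``the real obstacle'', and it is dispatched by one observation you have not recorded: between two consecutive peaks the path bends only upward, hence is convex, so the outgoing line at the first peak and the incoming line at the next one meet at a point whose vertical neighbourhood lies in both wedges. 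This gives $k+1\ge(\lambda+1)/2$ points directly.

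For \eqref{eq:h2}, the paper does \emph{not} try to thread the path through the separating vertices $v_i$ as you propose; forcing a monotone path to visit prescribed arrangement vertices and turn there is exactly the delicate assembly you worry about. Instead it walks along the upper boundary of the arrangement face $f_i$ containing each interior $p_i$, from its leftmost vertex $A_i$ to its rightmost $B_i$, and then uses the separating pair $r_i,s_i$ only to connect $B_i$ to $A_{i+1}$ (walk right from $B_i$ until you hit $r_i$ or $s_i$, proceed to their intersection, and symmetrically from $A_{i+1}$). One then checks that each such connecting piece contains at least one genuine bend, yielding $h(n)-3$ bends and hence $h(n)-2$ edges. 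Your counting ``$h-1$ forced turns give $h-2$ edges'' is off in any case: $h-1$ turns would give $h$ edges, so if your threading argument actually worked it would prove more than needed; the paper's route explains the honest loss of two.
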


\begin{proof}
  We first prove
$h(n) \geq \lceil(\lambda(n)+1)/2\rceil$. Let $\LL$ be a simple 
arrangement of $n$ lines that admits an
$x$-monotone path of length $m=\lambda(n)$.
Denote by $x_0,x_1, \ldots, x_{m}$ the vertices of a monotone path
arranged in increasing order of $x$-coordinates. In this notation 
$x_1, \ldots, x_{m-1}$ are vertices of the line arrangement $\LL$, while
$x_0$ and $x_m$ are chosen arbitrarily on the corresponding two rays which 
constitute the first and last edges, respectively, of the path.
For each $1 \leq i < m$ 
let $s_{i}$ denote the line that contains the segment
 $x_{i-1}x_{i}$, and let $r_{i}$ denote the line through
 the segment $x_{i}x_{i+1}$.

For $1 \leq i < m$, we say that the path bends downward at the vertex 
$x_{i}$ if the slope of 
$s_{i}$ is greater than the slope of $r_{i}$,
and it bends upward if the 
slope of $s_{i}$ is smaller than the slope of $r_{i}$.
Without loss of generality we may assume that at least half of the vertices 
$x_{1}, \ldots, x_{m-1}$ of the monotone path are downward bends.

\begin{figure}[ht]
\begin{center}
\includegraphics
{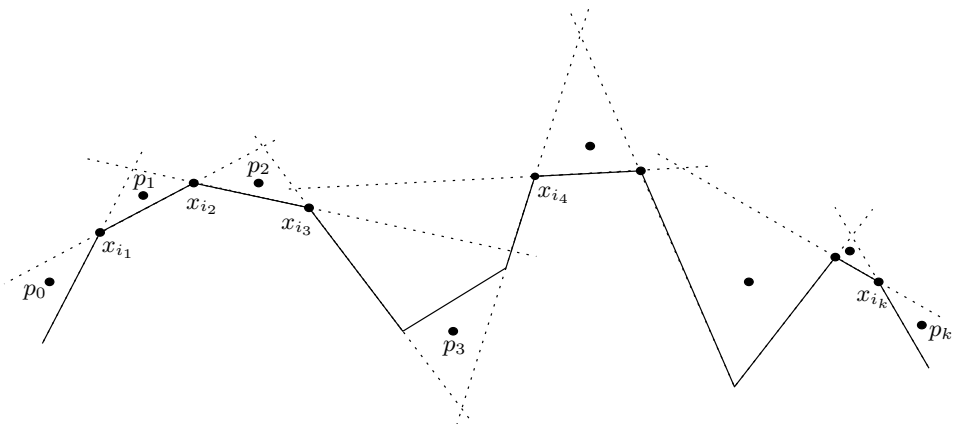}
\caption{Constructing a solution for Problem~\ref{problem-4}.}
\label{fig:2}
\end{center}
\end{figure}

Let $i_{1}< i_{2} < \dots < i_{k}$ be all indices such that $x_{i_{j}}$
is a downward bend, where $k\ge (m-1)/2$. 
Observe that for every $1 \leq j < k$,
the monotone path between $x_{i_{j}}$ and $x_{i_{j+1}}$ is an
upward-bending convex polygonal path.

We will now define $k+1$ points $p_{0}, p_{1}, \ldots, p_{k}$ such that
for every $0 \leq j <k$ the $x$-coordinate of $p_{j}$ is smaller than the
$x$-coordinate of $p_{j+1}$, and the line $r_{i_{j}}$ lies above $p_{j}$
and below $p_{j+1}$ while the line $s_{i_{j}}$ lies below $p_{j}$ and
above $p_{j+1}$. This construction will thus show that 
$h(n) \geq \lceil\frac{\lambda(n)+1}{2}\rceil$.

For every $1 \leq j \leq k$ let $U_{j}$ and $W_{j}$ denote the left and 
respectively the right wedges delimited by $r_{i_{j}}$ and $s_{i_{j}}$.
That is, $U_{j}$ is the set of all points that lie below $r_{i_{j}}$ and
above $s_{i_{j}}$.
Similarly, $W_{j}$ is the set of all points that lie above $r_{i_{j}}$ and
below $s_{i_{j}}$.

\begin{claim}\label{lemma:intersect}
For every $1 \leq j < k$, $W_{j}$ and $U_{j+1}$ have a nonempty intersection.
\end{claim}

\begin{proof}
We consider two possible cases:

\noindent{\bf Case 1.} $i_{j+1}=i_{j}+1$. In this case $r_{i_{j}}=s_{i_{j+1}}$.
Therefore any point above the line segment $[x_{i_{j}}x_{i_{j+1}}]$ that is
close enough to that segment lies both below $s_{i_{j}}$ and below 
$r_{i_{j+1}}$ and hence $W_{j} \cap U_{j+1} \neq \emptyset$.

\noindent{\bf Case 2.} $i_{j+1}-i_{j}>1$. In this case, as we observed earlier,
the monotone path between $x_{i_{j}}$ and $x_{i_{j+1}}$ is a convex 
polygonal path. Therefore, $r_{i_{j}}$ and $s_{i_{j+1}}$ are different lines
that meet at a point $B$ whose $x$-coordinate is between the $x$-coordinates
of $x_{i_{j}}$ and $x_{i_{j+1}}$.
Any point that lies vertically above $B$ and close enough to $B$
belongs to both $W_{j}$ and $U_{j+1}$. 
\qed

Now it is very easy to construct $p_{0}, p_{1}, \ldots ,p_{k}$,
see Figure~\ref{fig:2}.
Simply take $p_{0}$ to be any point in $U_{1}$, and for every $1 \leq j <k$
let $p_{j}$ be any point in $W_{j} \cap U_{j+1}$. Finally, let $p_{k}$ be any
point in $W_{k}$. It follows from the definition of 
$U_{1}, \ldots, U_{k}$ and $W_{1}, \ldots, W_{k}$ that for every 
$0 \leq j < k$,
 $r_{i_{j+1}}$ lies above $p_{j}$ and below $p_{j+1}$ and the
line $s_{i_{j+1}}$ lies below $p_{j}$ and above $p_{j+1}$.

  We now prove the opposite direction:
$\lambda(n) \geq h(n)-2$.

Assume we are given $h(n)$ points $p_1, \ldots, p_{h(n)}$ sorted by 
$x$-coordinate 
and a set of $n$ lines $\LL$ such that every pair $p_i,p_{i+1}$ is
strongly separated by~$\LL$.
By perturbing the lines if necessary, we can assume that none of the
lines goes through a point, and no three lines are concurrent.
For $1<i<h(n)$,
let $f_i$ be the face of the arrangement that contains $p_i$, and
let $A_i$ and $B_i$ be, respectively, the left-most and right-most vertex in 
this face.
(The faces $f_i$ are bounded, and therefore $A_i$ and $B_i$ are well-defined.)
The monotone path will follow the upper boundary of each face $f_i$
from $A_i$ to $B_i$.

We have to show that we can connect $B_i$ to $A_{i+1}$ by a monotone
path.
This follows from the separation property of $L$.
Let $s_i,r_i$ be a pair of lines that strongly separates
 $p_i$ and $p_{i+1}$ in such a way that
 $r_i$ lies above $p_{i}$ and below $p_{i+1}$ and 
 $s_{i}$ lies below $p_{i}$ and above $p_{i+1}$.
Since $B_i$ lies on the boundary of the face $f_i$ that contains
$p_i$, $B_i$ lies also between $r_i$ and $s_i$, including the
possibility
of lying on these lines. We can thus walk on the arrangement
from $B_i$ to the right until we hit
 $r_i$ or $s_i$, and from there we proceed straight
to the intersection point $Q_i$ of $r_i$ and $s_i$.
Similarly, there is a path in the arrangement from $A_{i+1}$
to the left that reaches $Q_i$. 
and these two paths together link $B_i$ with $A_{i+1}$.

To count the number of edges of this path,
we claim that there must be at least one bend
between $B_i$ and $A_{i+1}$ (including
the boundary points $B_i$ and $A_{i+1}$).
If there is no bend at $Q_i$, the path must go straight through
$Q_i$, say, on $r_i$. But then the path must leave $r_i$
at some point when going to the right: if the path has
not left $r_i$ by the time it reaches $A_{i+1}$ and
$A_{i+1}$ lies on $r_i$, then the path must bend upward at this point,
since it proceeds on the upper boundary of the face $f_{i+1}$ that
lies
above $r_i$.

Thus, the path makes at least $h(n)-3$ bends
(between $B_i$ and $A_{i+1}$, for $1<i<h(n)-1$) and contains at least
$h(n)-2$ edges.
\end{proof}

Now it is very easy to give a lower bound for $g(n)$, and prove Theorem
\ref{theorem:antichain}.
Indeed, this follows because $g(n) \geq h(n)$ and 
$h(n) \geq \lceil\frac{\lambda(n)+1}{2}\rceil=
\Omega(n^{2-\frac{d}{\sqrt{\log n}}})$,

The close relation between Problems~\ref{problem-1} and~\ref{problem-monotone} comes
probably as no big surprise if one considers the close connection
between $k$-sets and \emph{levels} in arrangements of lines
(see \cite[Section 3.2]{Edelbook}).
For a given set of $n$
points $P$, the $k$-sets are in one-to-one correspondence with the
faces of the dual arrangements of lines which have $k$ lines passing
below them and $n-k$ lines passing above them (or vice versa).  The
lower boundaries of these cells form the $k$-th level in the
arrangement, and the upper boundaries form the $(k+1)$-st level.

Our chain of equivalence from Problem~\ref{problem-1} to
Problem~\ref{problem-monotone} extends this relation between $k$-sets
and {levels} in a way that is not entirely trivial: for example, establishing
that we get sets that form an antichain requires some work, whereas
for $k$-sets this property is fulfilled automatically.

\section{Proof of Theorem \ref{theorem:n2}}\label{section:n2}

The heart of our argument uses a linear algebra approach 
first applied by Tverberg~\cite{T82} in his elegant proof for
a theorem of Graham and Pollak~\cite{GP72} on decomposition of the complete
graph into bipartite graphs.

Let $F$ be a collection of convex pseudo-discs of a set $P$ of $n$ points
in general position in the plane. We wish to bound from above the size of
$F$ assuming that no set in $F$ contains another.
For every directed line $L=\overrightarrow{xy}$ passing through two points 
$x$ and $y$ in $P$ we denote
by $L_{x}$ the collection of all sets $A \in F$ that lie in the closed 
half-plane to the left of $L$ such that $L$ touches $\conv{A}$ at the point 
$x$ only. Similarly, let $L_{y}$ be the collection of all 
sets $A \in F$ that lie in the closed half-plane to the left of $L$ such that 
$L$ touches $\conv{A}$ at the point $y$ only. Finally, let $L_{xy}$ be those 
sets $A \in F$ that lie in the closed half-plane to the left of $L$ such that
$L$ supports $\conv{A}$ at the edge $xy$.

\begin{definition}
Let $A$ and $B$ be two sets in $F$. Let $L$ be a directed line through 
two points $x$ and $y$ in $P$. We say that $L$ is a common tangent of the 
\emph{first kind} 
with respect the pair $(A,B)$ if $A \in L_{x}$ and $B \in L_{y}$.

We say that $L$ is a common tangent 
of the second kind with respect to $(A,B)$ if
$A \in L_{xy}$ and $B \in L_{y}$, or if
 $A \in L_{x}$ and $B \in L_{xy}$.
\end{definition}

The crucial observation about any two sets $A$ and $B$ in $F$ 
is stated in the following lemma.

\begin{lemma}\label{lemma:crucial}
Let $A$ and $B$ be two sets in $F$.
Then exactly one of the following conditions is true.
\begin{enumerate}
\item There is precisely one common tangent
of the first kind with respect to $(A,B)$
and no common tangent of the second kind with respect to $(A,B)$,
or
\item there is no common tangent of the first kind with respect to $(A,B)$, and
there are precisely two common tangents of the second kind with respect $(A,B)$.
\end{enumerate}

\end{lemma}
\begin{figure}
  \centering
  \includegraphics{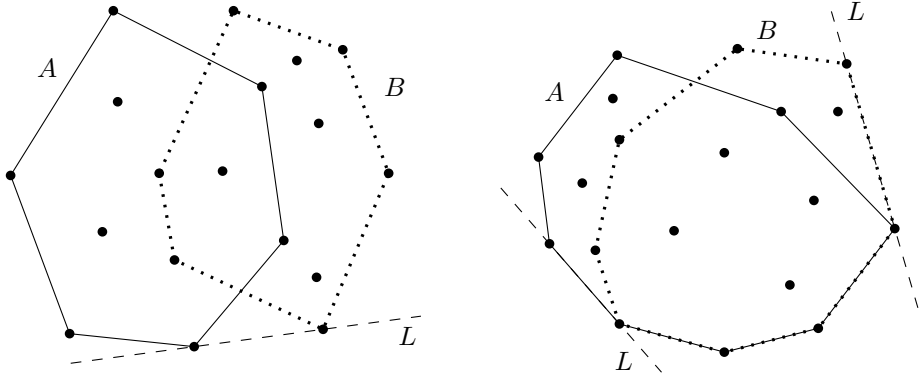}
  \caption{The two cases of common tangents in Lemma~\ref{lemma:crucial}}
  \label{common-tangents}
\end{figure}
\begin{proof}
  The idea is that because $A$ and $B$ are two pseudo-discs and none of 
$\conv{A}$ and $\conv{B}$ contains the other, then
as we roll a tangent around $C=\conv{A \cup B}$,
  there is precisely one transition between $A$ and $B$, and this is
  where the situation described in the lemma occurs (see
  Figure~\ref{common-tangents}).   

Formally, by our assumption on $F$, none of $A$
  and $B$ contains the other.  Any directed line $L$ that is a common
  tangent of the first or second kind with respect to $A$ and $B$ must
  be a line supporting $\conv{A \cup B}$ at an edge.

  Let $x_{0}, \ldots, x_{k-1}$ denote the vertices of $C=\conv{A \cup
    B}$ arranged in counterclockwise order on the boundary of $C$.  In
  what follows, arithmetic on indices is done modulo $k$.

There must be an index $i$ such that $x_{i} \in A \setminus B$, for
otherwise every $x_{i}$ belongs to $B$ and therefore 
$\conv{B} = \conv{A \cup B} \supset \conv{A}$ and therefore $B \supset A$ 
(because both $A$ and $B$ are intersections of $P$ with convex sets) 
in contrast to our assumption.
Similarly, there must be an index $i$ such that 
$x_{i} \in B \setminus A$. 

Let $I_{A}$ be the set of all indices $i$ such that $x_{i} \in A \setminus B$,
and let $I_{B}$ be the set of all indices $i$ such that 
$x_{i} \in B \setminus A$. 

We claim that $I_{A}$ (and similarly $I_{B}$) is a set of consecutive indices.
To see this, assume to the contrary that there are 
indices $i,j,i',j'$ arranged in a cyclic
order modulo $k$ such that $x_{i},x_{i'} \in A \setminus B$ and 
$x_{j},x_{j'} \in B$. Then it is easy to see that $\conv{A} \setminus \conv{B}$
is not a connected set because $x_{i}$ and $x_{i'}$ are in different connected
components of this set.

We have therefore two disjoint intervals $I_{A}=\{i_{A}, i_{A}+1,
\ldots, j_{A}\}$ and $I_{B}=\{i_{B}, i_{B}+1, \ldots, j_{B}\}$. It is
possible that $i_{A}=j_{A}$ or $i_{B}=j_{B}$.

Observe that $x_{i_{A}},x_{j_{A}},x_{i_{B}},x_{j_{B}}$ are arranged in this 
counterclockwise cyclic order on the boundary of $C$,
and for every index $i \notin I_{A} \cup I_{B}$,
$x_i \in A \cap B$. The only candidates for
common tangents of the first kind or of the second kind with respect to $A$ and $B$ are
of the form $\overrightarrow{x_{i}x_{i+1}}$, that is, they must pass through two
consecutive vertices of $C$.

We distinguish two possible cases:

\begin{enumerate}
\item $i_{B}=j_{A}+1$. In this case the line through
$x_{j_{A}}$ and $x_{i_{B}}$ is the only 
common tangent of the first kind with respect to $(A,B)$ and there are no
common tangents of the second kind with respect to $(A,B)$.

\item $i_{B} \neq j_{A}+1$. In this case,
there is no common tangent of the first kind with respect to $(A,B)$.
The line through $x_{i_{B}-1}$ and $x_{i_{B}}$ and the line
through $x_{j_{A}}$ and $x_{j_{A}+1}$ are the only common tangents of the second kind
with respect to $(A,B)$.
\end{enumerate}
This completes the proof of the lemma.
\end{proof}

Let $A_{1}, \ldots ,A_{m}$ be all the sets in $F$, and for every 
$1 \leq i \leq m$ let $z_{i}$ be an indeterminate associated with $A_{i}$.  
For each directed line $L=\overrightarrow{xy}$, define the following 
polynomial $P_{L}$: 
\begin{multline*}
P_{L}(z_{1}, \ldots,z_{m})=
\\ 
\biggl(\sum_{A_{i} \in L_{x}}z_{i}\biggr)\biggl(\sum_{A_{j} \in L_{y}}z_{j}\biggr)+
\frac12\biggl(\sum_{A_{i} \in L_{x}}z_{i}\biggr)\biggl(\sum_{A_{j} \in L_{xy}}z_{j}\biggr)+
\frac12\biggl(\sum_{A_i \in L_{y}}z_i\biggr)\biggl(\sum_{A_{j} \in L_{xy}}z_j\biggr)
\end{multline*}
This polynomial contains a term $z_uz_v$
whenever
$L$ is a tangent line for the pair $(A_u,A_v)$ or for the pair
$(A_v,A_u)$ (of the first or of the second kind,
and with coefficient~1 or $\frac12$, accordingly).  If we sum this
equation over all directed lines $L$, it follows by
Lemma~\ref{lemma:crucial} that every term $z_uz_v$ with $u\ne v$
appears with coefficient~2:
\begin{equation}\label{eq:1}
\sum_{L}P_L(z_{1}, \ldots,z_m)=\sum_{u<v}2z_u z_v=
(z_1+ \dotsb +z_{m})^2-(z_{1}^2+ \dots + z_{m}^2)
\end{equation}

Consider the system of linear equations
 $\sum_{A_{i} \in L_{x}}z_{i}=0$
and
 $\sum_{A_{i} \in L_{y}}z_{i}=0$, 
where
 $L=\overrightarrow{xy}$ varies over all directed lines
 determined by $P$.
 Add to this system the equation 
$z_{1}+ \dots +z_{m}=0$. There are $4\binom n 2+1$ equations in this system
and if $m > 4{\binom n 2}+1$, there must be a nontrivial solution.
However, it is easily seen that a nontrivial solution $(z_{1}, \ldots, z_{m})$
will result in a contradiction to (\ref{eq:1}). This is because the 
left-hand side of (\ref{eq:1}) vanishes, while the right-hand side 
equals $-(z_{1}^2+ \dots +z_{m}^2) \neq 0$. We conclude that 
$|F|=m \leq 4{\binom n2}+1$.
\end{proof}

We now show by a simple construction that Theorem \ref{theorem:n2} is tight 
apart from the multiplicative constant factor of $n^2$.
Fix three rays $r_{1}, r_{2}$, and $r_{3}$ emanating from the origin 
such that the angle between two rays is $120$ degrees. For each $i=1,2,3$, let 
$p^i_1, \ldots, p^i_n$ be $n$ points on $r_{i}$, indexed according to their
increasing distance from the origin.
Slightly perturb the points to get a set $P$ of $3n$ points in 
general position in the plane. For every $1 \leq j,k,l \leq n$ 
define 
$$
F_{jkl}=\{p^1_1, \ldots, p^1_j\} \cup \{p^2_1, \ldots, p^2_k\}
\cup \{p^3_1, \ldots, p^3_l\}.
$$
It can easily be checked that the collection of all $F_{jkl}$ such that 
$1 \leq j,k,l \leq n$ and $j+k+l=n+2$ is an anti-chain of convex
pseudo-discs of $P$. This collection consists of $\binom{n+1}2$
sets. 

\end{document}